\documentclass[12pt,twoside,a4paper]{article}
\usepackage[a4paper,left=25mm,right=25mm,bottom=25mm,top=25mm]{geometry}
\usepackage[T2A]{fontenc}
\usepackage[cp1251]{inputenc}
\usepackage[english, russian]{babel}
\usepackage{microtype}
\usepackage{amsmath}
\usepackage{amssymb}
\usepackage{amsfonts}
\usepackage{amsthm}
\usepackage{bm}
\usepackage{eucal}
\usepackage{centernot}
\usepackage{graphicx}
\usepackage{indentfirst}
\usepackage[integrals]{wasysym}
\usepackage{caption2}
\usepackage{mathptmx}
\usepackage{tempora}
\usepackage{trudyIM}
\usepackage[unicode]{hyperref}
\usepackage{xcolor}
\hypersetup{
colorlinks=true, 
linktoc=all,     
linkcolor=red,  
linkbordercolor=blue
}
\begin{document}

\setlength{\abovedisplayskip}{6pt} 
\setlength{\belowdisplayskip}{6pt} 
\setlength{\topsep}{0mm} 



\English 
\setcounter{section}{0}
\setcounter{equation}{0}
\setcounter{figure}{0}
\setcounter{footnote}{0}

\firstheader{}

\mainheaderodd{} 

\mainheaderodd{Factorization of central polynomials over division algebras} 

\mainheadereven{A.~G.~Goutor, S.~V.~Tikhonov} 

\pagestyle{main} 
\thispagestyle{first} 

\

\begin{center}
\large\bfseries FACTORIZATION OF CENTRAL POLYNOMIALS OVER DIVISION ALGEBRAS
\end{center}

{\let\thefootnote\relax\footnote{The research was conducted under the 2026 grant from the President of the Republic of Belarus in the field of science.}}

\medskip

\begin{center}
\large\bfseries A.~G.~Goutor$^1$, S.~V.~Tikhonov$^2$\label{ivanovstart}
\end{center}

\medskip

\begin{center}
\it $^1$Belarusian State University, Minsk, Belarus\\
\it $^2$Belarusian State University, Minsk, Belarus\\
e-mail: goutor7@gmail.com, tikhonovsv@bsu.by
\end{center}

\bigskip

\engabstract{The factorization problem is a classical topic in the theory of polynomial rings. In this paper, we obtain an irreducibility criterion for central polynomials over
division algebras. An example demonstrating the computation of factors for a central polynomial is also presented. }
\engkewords{polynomial ring over a division algebra, central polynomial, factorization of polynomials}

\noindent\begin{tabular}{@{}p{0.25\textwidth}p{0.725\textwidth}@{}}
\footnotesize\textbf{Keywords:} \insertengkeywords&
\footnotesize\textbf{Abstract.} \insertengabstract\\
\hline
\end{tabular}


\section{Introduction and notation}

Let $K$ be a field. Recall that a finite-dimensional associative unital $K$-algebra is called central simple if its center is $K$ and it has no non-trivial two-sided ideals.
Let $\cal{D}$ be a central division algebra with center $K$. Let $\cal{D}^{*}$ also denote its multiplicative group.
We denote by ${\cal{D}}[x]$ the ring of polynomials in $x$ with coefficients in $\cal{D}$. We assume that $x$ commutes with all elements of $\cal{D}$.
Thus, every polynomial in ${\cal{D}}[x]$ has the form

\begin{equation} \label{eq_p(x)}
a_nx^n+a_{n-1}x^{n-1}+\dots + a_1x+a_0,\quad a_0,\dots, a_n \in \cal{D}.
\end{equation}

Polynomial addition and multiplication in the ring ${\cal{D}}[x]$ are defined in the usual way.
The degree of a polynomial of the form (\ref{eq_p(x)}) is also defined conventionally and equals $n$ if $a_n\ne 0$.
The right division theorem with remainder holds in ${\cal{D}}[x]$, which allows us to define the greatest common right divisor 
for any polynomials $p(x),q(x) \in {\cal{D}}[x]$, and it can be found using the Euclidean algorithm (see \cite{Or33}).
It is known that $a\in {\cal{D}}$ is a right root of a polynomial $p(x) \in {\cal{D}}[x]$ if and only if
$x-a$ is a right divisor of $p(x)$ in ${\cal{D}}[x]$ (\cite[Proposition 16.2]{La91}), that is,  $p(x)=s(x)(x-a)$ for some polynomial $s(x)$ in ${\cal{D}}[x]$.

The basic properties of polynomials over division rings are described in \cite[Ch. 5, \S 16]{La91} (see also \cite{GoMo65}, \cite{BrWh83}).

The factorization problem is a classical topic in the theory of polynomial rings. Every monic polynomial in ${\cal{D}}[x]$ can be expressed as a product of irreducible polynomials, but
the theory in the non-commutative case is significantly more challenging and profoundly richer.
In particular, such polynomials do not possess a unique factorization into irreducible factors; however, any two factorizations of the same polynomial contain
the same number of irreducible factors (see, e.g., \cite[Chapter II, Theorem 1]{Or33}).

In the non-commutative setting, polynomials over the algebra of Hamiltonian quaternions $\mathbb {H}$ have been the most extensively studied (see \cite{GoMo65}, \cite{Ni41}).
By the Niven-Jacobson theorem \cite[Theorem 16.14]{La91}, the quaternion division algebra
over a real-closed field is algebraically closed, that is every polynomial in $\mathbb {H}[x]$ splits completely into a product of linear factors.
The problem of finding the roots of polynomials from $\mathbb {H}[x]$ was also considered, for example, in \cite{FaMiSeSo17}, \cite{HuSo02}, \cite{JaOp10}, \cite{SePeVi01}, \cite{SeSi01},
\cite{Ka13}, \cite{SaKoSo19}, \cite{GoTi25}.
Factorization algorithms for $\mathbb {H}[x]$ can be found in \cite{LiScSc19} and \cite{ScSc21}.
The case of generalized quaternion algebras has been recently considered in \cite{Ko23},
where the author presented algorithms for factoring polynomials over quaternion division algebras over number fields and for finding their roots.
The goal of this paper is to generalize some results from \cite{Ko23} to arbitrary central division algebras.

In particular, the following result was obtained in \cite{GoTi25}:

\begin{theorem} \label{th:factorization}
Every polynomial $p(x)\in {\cal{D}}[x]$ can be uniquely represented as
$$
p(x)=c g(x) h(x),
$$
where $c\in {\cal{D}}^*$ is the leading coefficient of $p(x)$, $h(x)$ is a monic polynomial with coefficients in the field $K$,
$g(x) \in {\cal{D}}[x]$ is a monic polynomial that has no non-constant right divisors in $K[x]$.
\end{theorem}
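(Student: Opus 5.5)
The plan is to take $h(x)$ to be the monic generator of the largest right divisor of $p(x)$ that lies in $K[x]$, and then show that the remaining factor has no central right divisors. We may assume $p\neq 0$; otherwise the statement is vacuous. Dividing by the leading coefficient, set $p_1(x)=c^{-1}p(x)$, which is monic. Consider the set
$$I=\{f(x)\in K[x] : f \text{ is a right divisor of } p_1 \text{ in } {\cal D}[x]\}\cup\{0\}.$$

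The first step is to show that $I$ is an ideal of $K[x]$, or at least closed under greatest common divisors and least common multiples. Closure under multiplication by $K[x]$ is not immediate, so I would rather show that $I$ is closed under lcm. Central polynomials commute with everything in ${\cal D}[x]$. Hence if $f_1,f_2\in K[x]$ both right-divide $p_1$, then $\gcd_{K[x]}(f_1,f_2)=1$ implies $f_1f_2$ right-divides $p_1$. To see this, write $p_1=s f_1$. Since $f_2$ divides $p_1$ and is central, the quotient ring ${\cal D}[x]/f_2{\cal D}[x]$ is available. Moreover $f_1$ is invertible modulo $f_2$ via a Bézout identity $uf_1+vf_2=1$ in $K[x]$. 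Then $s=s(uf_1+vf_2)=u p_1+ s v f_2$, and $f_2$ right-divides $p_1$, so $f_2$ right-divides $s$. Therefore $f_1f_2$ right-divides $p_1$.

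The general lcm case is handled similarly: use the Euclidean gcd in $K[x]$ together with the fact that $a\in K[x]$ divides $p_1$ on the right iff it does so on the left, since $p_1=sa$ gives $ap_1=a s a$. Since degrees are bounded by $\deg p_1$, there is a unique monic $h\in K[x]$ of maximal degree right-dividing $p_1$, and every central right divisor of $p_1$ divides $h$. This closure argument is the main obstacle, and it is where I would spend care. Concretely, I would prove that $f\in K[x]$ right-divides $p_1$ iff $\gcd_{{\cal D}[x]}(p_1,f)=f$. Then I would observe that the monic right gcd of $p_1$ with a central polynomial is itself central: it is invariant under conjugation by ${\cal D}^*$, because conjugation fixes $p_1$ up to a scalar and fixes $f$, and uniqueness of the monic gcd forces its coefficients to lie in $K$.

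Existence then follows. Write $p_1=g h$ with $g$ monic, and suppose $g=g' f$ with $f\in K[x]$ nonconstant. Then $p_1=g'(fh)$, so $fh$ is a central right divisor of $p_1$ of degree larger than $\deg h$, contradicting maximality. Hence $g$ has no nonconstant central right divisors.

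Uniqueness. Suppose $p=c'g'h'$ is another such representation. Comparing leading coefficients, $c'=c$ because $g'$ and $h'$ are monic. Thus $gh=g'h'$, and $h'\mid h$ in $K[x]$ by the maximality property; write $h=h'k$ with $k\in K[x]$. Cancelling $h'$ on the right (${\cal D}[x]$ is a domain) gives $g'=gk$. Since $g'$ has no nonconstant central right divisors, $k$ is constant, and since it is monic, $k=1$. Therefore $h=h'$ and $g=g'$.
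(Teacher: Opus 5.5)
The paper does not prove this theorem; it quotes it from \cite{GoTi25}, so your argument has to stand on its own. Its main line does. The B\'ezout step for coprime $f_1,f_2$ is correct. You only gesture at the general lcm case, but it goes through as follows. Put $d=\gcd(f_1,f_2)$ and write $p_1=qd$. Cancelling $d$ in the domain ${\cal D}[x]$ shows that $f_1/d$ and $f_2/d$ both right-divide $q$. The coprime case applied to $q$ then shows that $\mathrm{lcm}(f_1,f_2)=(f_1/d)(f_2/d)\,d$ right-divides $p_1$. Closure under lcm gives a unique monic central right divisor $h$ of maximal degree, divisible by every central right divisor. Your existence and uniqueness paragraphs are then correct as written. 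The left/right remark is not needed, and the identity $ap_1=asa$ you cite proves nothing; the only relevant fact is $sa=as$ for central $a$.

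The paragraph beginning ``Concretely'' is false and should be deleted. Conjugation by $d\in{\cal D}^*$ sends $p_1$ to $dp_1d^{-1}$, which in general is not a scalar multiple of $p_1$. Nor is the monic right gcd of $p_1$ with a central polynomial necessarily central. For a counterexample, work in $\mathbb{H}[x]$ with $p_1=x-i$ and $f=x^2+1=(x+i)(x-i)$. Their right gcd is $x-i\notin\mathbb{R}[x]$, and conjugation by $j$ turns it into $x+i$. Nothing else in your proof uses this claim, so the proof survives its removal. Similarly, $I$ is never an ideal of $K[x]$, since $x^N\notin I$ for $N>\deg p_1$. So ``not immediate'' should read ``false''; what you actually use is closure under lcm.

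A shorter route avoids the lcm bookkeeping entirely. Fix a $K$-basis $e_1,\dots,e_m$ of ${\cal D}$ and write $p_1=\sum_i e_i q_i(x)$ with $q_i\in K[x]$; this is unique because ${\cal D}[x]\cong{\cal D}\otimes_K K[x]$. A central $f$ commutes with everything, so $p_1=sf$ with $s=\sum_i e_is_i$ holds if and only if $q_i=s_if$ for all $i$. Hence the central right divisors of $p_1$ are exactly the common divisors of the $q_i$. The maximal one is therefore the monic $h=\gcd(q_1,\dots,q_m)$; equivalently, $h$ generates the two-sided ideal ${\cal D}[x]p_1{\cal D}[x]$. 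This route gives ``every central right divisor divides $h$'' in one line and makes $h$ directly computable. Yours reaches the same property through B\'ezout identities in $K[x]$, which is correct but longer. After that point, the existence and uniqueness steps are identical in both routes.
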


We call the polynomial $h(x)$ from the previous theorem the maximal central factor of $p(x)$.
In view of this theorem, factoring polynomials in ${\cal{D}}[x]$ reduces to factoring those with coefficients in $K$ that are irreducible in $K[x]$, and those with no non-trivial factors in $K[x]$.

In this paper, we consider the irreducibility and factorization of central polynomials (i.e., polynomials with coefficients in $K$) in ${\cal{D}}[x]$.
We obtain an irreducibility criterion for central polynomials over division algebras.
We also present an example demonstrating the computation of factors for a central polynomial over a symbol algebra of degree 3.

Throughout this paper, we adopt the following notation. For a central simple $K$-algebra ${\cal{A}}$, let $\mathrm{deg}(\mathcal{A})$ denote its degree (i.e., the square root of its dimension).
The reduced norm of an element $a\in {\cal{A}}$  is denoted by $\mathrm{Nrd}(a)$. In particular, for any $b\in K$, we have $\mathrm{Nrd}(b)=b^{\mathrm{deg}(\mathcal{A})}$.
For a polynomial $f(x) \in {\cal{D}}[x]$, $\mathrm{Nrd}(f(x)$) denotes the reduced norm of $f(x)$ in the central division algebra ${\cal{D}}\otimes_K K(x)$, where $K(x)$ is the field of rational functions in one variable.
Note that for any $f(x) \in {\cal{D}}[x]$, $\mathrm{deg} (\mathrm{Nrd}(f(x))) = \mathrm{deg}(\mathcal{D}) \mathrm{deg} (f(x))$.

\section{Irreducibility of central polynomials}

We begin with the following irreducibility criterion for central polynomials, which generalizes Proposition 6 from \cite{Ko23}.

\begin{theorem} \label{th:criterion}
Let ${\cal{D}}$ be a central division algebra with center $K$ and $f(x)\in K[x]$ be irreducible over $K$.
Let $L$ be the field $K[x]/(f(x))$.
Then $f(x)$ remains irreducible in ${\cal{D}}[x]$ if and only if the algebra ${\cal{D}}\otimes_K L$ has no zero divisors.
\end{theorem}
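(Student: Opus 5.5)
The approach is to identify the quotient ring ${\cal{D}}[x]/{\cal{D}}[x]f(x)$ with ${\cal{D}}\otimes_K L$ and translate irreducibility of $f(x)$ into the absence of zero divisors there. Since $f(x)$ is central, the left ideal ${\cal{D}}[x]f(x)$ is a two-sided ideal. As ${\cal{D}}[x]\cong{\cal{D}}\otimes_K K[x]$, we get
$$
A:={\cal{D}}[x]/({\cal{D}}[x]f(x))\cong {\cal{D}}\otimes_K K[x]/(f(x))={\cal{D}}\otimes_K L .
$$
This is a central simple $L$-algebra of dimension $\deg({\cal{D}})^2$ over $L$ (the base change of a central simple algebra). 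So $A$ is finite-dimensional, and it has no zero divisors if and only if it is a division algebra.

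For the direction ``$f$ reducible $\Rightarrow$ zero divisors'', suppose $f(x)=g(x)h(x)$ with $g,h\in{\cal{D}}[x]$ both of positive degree. Then $\deg g,\deg h<\deg f$. By division with remainder, the only multiples of $f$ of degree less than $\deg f$ are zero, so the images $\bar g,\bar h$ in $A$ are nonzero. Their product is $\overline{f}=0$, hence $A$ has zero divisors.

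For the converse, suppose $A$ has zero divisors, say $\bar g\bar h=0$ with $\bar g,\bar h\neq 0$. Take representatives $g,h$ reduced modulo $f$ on the right, so $0\le\deg g,\deg h<\deg f$ and $g,h\neq0$. Then $g(x)h(x)=q(x)f(x)=f(x)q(x)$ for some $q(x)\in{\cal{D}}[x]$, using centrality of $f$. The goal is a nontrivial factorization of $f$. Let $d(x)$ be the greatest common right divisor of $h(x)$ and $f(x)$, which exists by the Euclidean algorithm. We may take $d$ monic, and then $\deg d\le\deg h<\deg f$.

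The main obstacle is showing $\deg d>0$. Suppose instead that $d=1$. Then there are $u,v\in{\cal{D}}[x]$ with $u h+v f=1$. Multiplying on the left by $g$ gives
$$
g=guh+gvf .
$$
Since $f$ is central, $guh$ is congruent to $gu\,h$ modulo the two-sided ideal ${\cal{D}}[x]f$. Working in $A$, this gives $\bar g=\bar g\bar u\bar h$. That alone is not obviously a contradiction, so I would instead argue via finite-dimensionality. Right multiplication by $\bar h$ on $A$ is a $K$-linear map on a finite-dimensional space, and its kernel contains $\bar g\ne0$, so it is not injective, hence not surjective. On the other hand, $\bar u\bar h=1$ says $\bar h$ has a left inverse. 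In a finite-dimensional algebra a left inverse is a two-sided inverse, so $\bar h$ is invertible and $\bar g\bar h=0$ forces $\bar g=0$, a contradiction. Hence $\deg d\ge1$.

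Therefore $d$ is a right divisor of $f$ with $0<\deg d<\deg f$, say $f=e\,d$, and then $e$ also has positive degree. So $f$ is reducible in ${\cal{D}}[x]$, which completes the proof.
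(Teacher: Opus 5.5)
Your proof is correct and follows essentially the paper's route: the identification ${\cal{D}}[x]/(f(x))\cong{\cal{D}}\otimes_K L$, the images of nontrivial factors serving as zero divisors, and, for the converse, a Bezout identity for the greatest common right divisor with $f$. The one difference is that the paper takes the greatest common right divisor of $f$ with the \emph{left} zero divisor $p$ and multiplies $1=A_2p+A_1f$ on the right by $q$, which gives $q=(A_2s+A_1q)f$ directly by centrality of $f$; this makes your finite-dimensionality step (a left inverse is a two-sided inverse) unnecessary.
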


\begin{proof}
The homomorphism of $K$-algebras
$$
{\cal{D}}[x]\to {\cal{D}}\otimes_K L,
$$
$$
a_nx^n+\dots + a_1x+a_0 \mapsto a_n\otimes (x+(f(x)))^n+\dots + a_1\otimes (x+(f(x)))+a_0
$$
induces the isomorphism
$$
{\cal{D}}[x]/(f(x)) \cong {\cal{D}}\otimes_K L.
$$

Assume that ${\cal{D}}\otimes_K L$ has no zero divisors. If $f(x)=p(x)q(x)$ for some $p(x), q(x) \in {\cal{D}}[x]$, $\mathrm{deg}(p(x)), \mathrm{deg}(q(x)) > 0$,
then $p(x)+(f(x))$ and $q(x)+(f(x))$ are zero divisors in ${\cal{D}}[x]/(f(x))$. Contradiction.
	
Now assume that $f(x)$ remains irreducible in ${\cal{D}}[x]$.
If there are zero divisors in ${\cal{D}}[x]/(f(x))$, then there exist
$p(x), q(x)\in {\cal{D}}[x]$ such that $f(x)\nmid p(x)$, $f(x)\nmid q(x)$ and $p(x)q(x)=s(x)f(x)$ for some $s(x)\in {\cal{D}}[x]$.
Since $f(x)$ is an irreducible polynomial in ${\cal{D}}[x]$, the greatest common right divisor of $f(x)$ and $p(x)$ is 1.
By \cite[Theorem 4]{Or33}, there exist $A_1(x), A_2(x)\in {\cal{D}}[x]$ such that
$$
1=A_2(x)p(x)+A_1(x)f(x).
$$
Then
$$
q(x)=A_2(x)p(x)q(x)+A_1(x)f(x)q(x)=A_2(x)s(x)f(x)+A_1(x)q(x)f(x).
$$
Hence $f(x)\mid q(x)$. Contradiction.
\end{proof}



As a consequence, we obtain the following corollaries.

\begin{corollary}
Let $\cal{D}$ be a central division $K$-algebra of degree $n$.
Let $f(x)\in K[x]$ be an irreducible polynomial in $K[x]$ and $\mathrm{GCD}(\mathrm{deg} (f(x)), n)=1$. Then $f(x)$ remains irreducible in ${\cal{D}}[x]$.
\end{corollary}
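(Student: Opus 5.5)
By Theorem~\ref{th:criterion}, it suffices to show that ${\cal{D}}\otimes_K L$ is a division algebra, where $L=K[x]/(f(x))$ and $[L:K]=m=\mathrm{deg}(f(x))$. So the plan is to reduce the statement to a classical fact about how the index of a central division algebra behaves under a finite field extension of coprime degree.

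First, ${\cal{D}}\otimes_K L$ is a central simple $L$-algebra of degree $n$, since scalar extension preserves central simplicity. By Wedderburn's theorem, ${\cal{D}}\otimes_K L\cong M_r(\Delta)$ for some central division $L$-algebra $\Delta$ of degree $d$, with $rd=n$. The goal is to show $r=1$, i.e.\ that $\mathrm{ind}({\cal{D}}\otimes_K L)=n$.

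The key step, and the only nontrivial one, is the standard index-reduction bound $\mathrm{ind}({\cal{D}})\mid [L:K]\cdot\mathrm{ind}({\cal{D}}\otimes_K L)$, which gives $n\mid m d$. I would cite this (e.g.\ Pierce, \emph{Associative Algebras}, \S13.4, or Gille--Szamuely, Prop.~4.5.8). If a self-contained argument is wanted, one can proceed as follows. Choose a maximal subfield $M$ of $\Delta$, so that $[M:L]=d$ and $M$ splits ${\cal{D}}\otimes_K L$, hence splits ${\cal{D}}$. Then $M$ is a splitting field of ${\cal{D}}$ of degree $[M:K]=md$ over $K$. Any finite splitting field of ${\cal{D}}$ has degree divisible by $\mathrm{ind}({\cal{D}})=n$, because it embeds as a maximal subfield in some $M_k({\cal{D}})$ with $k\,n=[M:K]$. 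Hence $n\mid md$.

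Since $\mathrm{GCD}(m,n)=1$, the divisibility $n\mid md$ forces $n\mid d$. Combined with $d\mid n$, this gives $d=n$ and $r=1$. Therefore ${\cal{D}}\otimes_K L=\Delta$ is a division algebra, and Theorem~\ref{th:criterion} yields that $f(x)$ is irreducible in ${\cal{D}}[x]$.

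The main obstacle is the divisibility $n\mid md$. Everything else is bookkeeping, so I would simply invoke the standard result rather than reprove it.
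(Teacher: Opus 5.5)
Your proposal is correct and follows the paper's proof: the paper likewise cites \cite[Proposition 13.4]{Pi82} to conclude that ${\cal{D}}\otimes_K L$ remains a division algebra when $\mathrm{GCD}([L:K],n)=1$, and then applies Theorem~\ref{th:criterion}. Your optional self-contained argument is a correct unpacking of that cited result: a maximal subfield of $\Delta$ gives a splitting field of ${\cal{D}}$ of degree $md$, hence $n\mid md$, and $\mathrm{GCD}(m,n)=1$ then forces $d=n$.
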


\begin{proof}
Let $L$ be the field $K[x]/(f(x))$. Then $[L:K]= \mathrm{deg} (f(x))$.
By \cite[Proposition 13.4]{Pi82}, ${\cal{D}}\otimes_K L$ is also a division algebra. By Theorem \ref{th:criterion}, $f(x)$ is irreducible in ${\cal{D}}[x]$.
\end{proof}

\begin{corollary}
Let $\cal{D}$ be a central division $K$-algebra.  Let $f(x)\in K[x]$ be an irreducible polynomial in $K[x]$. Assume that $\mathrm{deg} ({\cal{D}})=\mathrm{deg} (f(x))$ is a prime number.
If $f(x)$ is reducible in ${\cal{D}}[x]$, then $f(x)$ is a product of linear factors in ${\cal{D}}[x]$.
\end{corollary}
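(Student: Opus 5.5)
Let $p=\mathrm{deg}({\cal{D}})=\mathrm{deg}(f(x))$ and suppose $f(x)=g_1(x)\cdots g_m(x)$ is a factorization of $f(x)$ into monic irreducible factors in ${\cal{D}}[x]$ with $m\ge 2$. The plan is a degree count based on reduced norms, and the claim is that every $g_i$ must have degree $1$.

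First, compute the reduced norm of $f(x)$. Since $f(x)\in K[x]$ is central, $\mathrm{Nrd}(f(x))=f(x)^p$ in ${\cal{D}}\otimes_K K(x)$; this has degree $p^2$, consistent with the formula $\mathrm{deg}(\mathrm{Nrd}(f))=p\,\mathrm{deg}(f)$. The reduced norm is multiplicative, so
$$
f(x)^p=\mathrm{Nrd}(g_1(x))\cdots\mathrm{Nrd}(g_m(x)).
$$
Each $\mathrm{Nrd}(g_i(x))$ lies in $K[x]$: the reduced norm of an element of ${\cal{D}}[x]$ is a polynomial in the coefficients, so it lies in $K[x]$. Each is monic up to the constant $\mathrm{Nrd}(1)=1$, because $\mathrm{Nrd}$ of the leading coefficient $1$ equals $1$. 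Since $f$ is irreducible in $K[x]$ and $K[x]$ is a UFD, $\mathrm{Nrd}(g_i(x))=f(x)^{k_i}$ for some $k_i\ge 1$, with $\sum k_i=p$. Comparing degrees gives $p\,\mathrm{deg}(g_i)=p\,k_i$, hence $\mathrm{deg}(g_i)=k_i$.

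Second, we need each $k_i=1$. It does not suffice to use only $m\ge2$ and $\sum \mathrm{deg}(g_i)=p$, since that allows, for example, degrees $1$ and $p-1$. Theorem \ref{th:criterion} gives more structure: reducibility means ${\cal{D}}\otimes_K L$ has zero divisors. This algebra is central simple over $L$ of degree $p$, so it is $M_r(\Delta)$ with $r\cdot\mathrm{deg}(\Delta)=p$, and $r>1$. Because $p$ is prime, $r=p$ and ${\cal{D}}\otimes_K L\cong M_p(L)$. We then use the isomorphism ${\cal{D}}[x]/(f)\cong M_p(L)$ from the proof of Theorem \ref{th:criterion}. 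The left ${\cal{D}}[x]$-module ${\cal{D}}[x]/{\cal{D}}[x]g_i$ is a quotient of ${\cal{D}}[x]/(f)$ whenever $g_i$ right-divides $f$. As a ${\cal{D}}$-vector space it has dimension $\mathrm{deg}(g_i)$, so its $K$-dimension is $p^2\mathrm{deg}(g_i)$. It is annihilated by the two-sided ideal $(f)$, so it is a module over $M_p(L)$. Irreducibility of $g_i$ makes this module simple, because submodules correspond to right divisors of $g_i$. The unique simple $M_p(L)$-module is $L^p$, of $K$-dimension $p\cdot p=p^2$. Therefore $p^2\mathrm{deg}(g_i)=p^2$, so $\mathrm{deg}(g_i)=1$. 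Applying this to the last factor $g_m$, then to $g_{m-1}$ as an irreducible right factor of $f$ after cancelling $g_m$, and so on, makes the full chain consist of linear factors.

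The main obstacle is this module-theoretic step, in particular the correspondence between submodules of ${\cal{D}}[x]/{\cal{D}}[x]g$ and right divisors of $g$. That correspondence follows from ${\cal{D}}[x]$ being a left principal ideal domain, since left ideals containing ${\cal{D}}[x]g$ have the form ${\cal{D}}[x]h$ with $h$ right-dividing $g$. For the inner factors a simpler route may work. Every composition factor of the module ${\cal{D}}[x]/(f)\cong M_p(L)$ is isomorphic to $L^p$. By Jordan--Hölder, the factors $g_i$ in any factorization give the composition factors ${\cal{D}}[x]/{\cal{D}}[x]g_i$, each of $K$-dimension $p^2$, so every $\mathrm{deg}(g_i)=1$.
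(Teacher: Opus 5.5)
Your proof is correct, but it takes a different route from the paper. The paper uses $\mathcal{D}\otimes_K L\cong M_p(L)$ only to see that $L$ is a splitting field of degree $p=\mathrm{deg}(\mathcal{D})$, hence embeds in $\mathcal{D}$ as a maximal subfield (Pierce, Cor.~13.3). Thus $f$ has a root $a\in\mathcal{D}$, and the cited Wedderburn theorem (Lam, Thm.~16.9) splits $f$ into linear factors $x-a_i$ with $a_i$ conjugate to $a$.

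You instead use $\mathcal{D}[x]/(f)\cong M_p(L)$ module-theoretically. A factorization $f=g_1\cdots g_m$ into irreducibles gives the chain $\mathcal{D}[x]f\subset\mathcal{D}[x]g_2\cdots g_m\subset\cdots\subset\mathcal{D}[x]g_m\subset\mathcal{D}[x]$. Its factors are the simple modules $\mathcal{D}[x]/\mathcal{D}[x]g_i$ (via right multiplication by $g_{i+1}\cdots g_m$), so each is $\cong L^p$. Comparing $K$-dimensions, $p^2\,\mathrm{deg}(g_i)=p^2$, forces linearity.

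Rely on this composition-series version. The induction ``$g_m$, then $g_{m-1}$ after cancelling $g_m$'' is not justified as worded. The module $\mathcal{D}[x]/\mathcal{D}[x]g_{m-1}$ is a $\mathcal{D}[x]/(f)$-module only if $g_{m-1}$ right-divides $f$ itself, not merely $g_1\cdots g_{m-1}$. This does hold, because $f=AB$ with $f$ central forces $BA=AB$, so $f=g_mg_1\cdots g_{m-1}$. But it has to be said. The reduced-norm paragraph is never used and can be dropped.

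As for what each route buys: the paper's produces explicit factors from a root of $f$ in $\mathcal{D}$, but leans on two external theorems. Yours is self-contained. It shows directly, without Ore's invariance of the number of irreducible factors, that \emph{every} irreducible factorization of $f$ is linear. It also generalizes verbatim: if $\mathcal{D}\otimes_K L\cong M_r(\Delta)$, the same count shows $f$ has exactly $r$ irreducible factors in $\mathcal{D}[x]$, each of degree $\mathrm{deg}(f)/r$.
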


\begin{proof}
Since $f(x)$ is reducible in ${\cal{D}}[x]$, by Theorem \ref{th:criterion}, the algebra ${\cal{D}}\otimes_K L$ has zero divisors, where $L=K[x]/(f(x))$.
Since the degree of this algebra is a prime number,
${\cal{D}}\otimes_K L$ is a matrix algebra over $L$. Hence $L$ is a splitting field of the algebra ${\cal{D}}$
and $L$ is isomorphic to a maximal subfield of the algebra ${\cal{D}}$ by \cite[Corollary 13.3]{Pi82}.
Since $f(x)$ has a root in $L$, $f(x)$ has a root in ${\cal{D}}$ as well. Therefore $f(x)$ is a product of linear factors in ${\cal{D}}[x]$ by \cite[Theorem 16.9]{La91}.
\end{proof}

Let ${\cal{U}}$ be a quaternion division algebra over a field $K$. Further, let
$f(x)\in K[x]$ be irreducible over $K$.
By \cite[Proposition 5]{Ko23}, either $f(x)$ remains irreducible in ${\cal{U}}[x]$ or 
$f(x)=\mathrm{Nrd}(p(x))$ for some irreducible quaternionic polynomial $p(x)\in {\cal{U}}[x]$.

In the case of algebras of higher degrees, this statement ceases to be true as the following example shows.

\begin{example}
Let ${\cal{D}}$ be a cyclic division $K$-algebra of degree 6 with maximal cyclic subfield $E$.
Let $F$ be a field such that $K \subset F \subset E$, $[F:K]=2$.

Let $a \in F, a \notin K$, and let $g(x)$ be the minimal polynomial of $a$ over $K$.
Then $\mathrm{deg}(g(x))=[F:K]=2$, $g(x)$ is irreducible over $K$, but $g(x)$ is reducible over $F$ (since $g(x)$ has a root $a \in F$).
Therefore $g(x)$ is reducible over $\cal{D}$. But $g(x)$ cannot be the norm of any polynomial in ${\cal{D}}[x]$, since its degree is less than 6.
\end{example}

However, in certain special cases, one can still obtain analogues of \cite[Proposition 5]{Ko23}.

\begin{proposition}
Let ${\cal{D}}$ be a central division $K$-algebra of degree 3. Further, let $f(x)\in K[x]$ be a monic irreducible in $K[x]$ polynomial.
Then either $f(x)$ remains irreducible in ${\cal{D}}[x]$ or $f(x)=\mathrm{Nrd}(s(x))$ for some irreducible polynomial $s(x)\in {\cal{D}}[x]$.
\end{proposition}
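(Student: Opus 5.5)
Suppose $f(x)$ is reducible in ${\cal{D}}[x]$, say $f(x)=p(x)q(x)$ with both factors of positive degree. The plan is to use the reduced norm, which is multiplicative, to control the degrees of irreducible factors. Since $f\in K[x]$ is central, $\mathrm{Nrd}(f(x))=f(x)^3$. Let $s(x)$ be any monic irreducible right divisor of $f(x)$ in ${\cal{D}}[x]$, so that $f(x)=t(x)s(x)$. Then
$$
f(x)^3=\mathrm{Nrd}(t(x))\,\mathrm{Nrd}(s(x)).
$$
Here $\mathrm{Nrd}(s(x))$ is a monic polynomial in $K[x]$, since reduced norms of elements of ${\cal{D}}[x]$ have coefficients in $K[x]$: the reduced characteristic polynomial is integral over $K[x]$. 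Because $K[x]$ is a UFD and $f$ is irreducible, we get $\mathrm{Nrd}(s(x))=f(x)^k$ with $0\le k\le 3$. Comparing degrees gives $3\deg s=k\deg f$, so $k\ge1$ because $\deg s>0$.

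Next I exclude $k=3$. In that case $\deg s=\deg f$, so $t$ is a constant, which must equal $1$ by monicity, and $s=f$. That contradicts the assumption that $f$ is reducible, since then $f$ would have no proper right divisor of positive degree. Indeed, any nontrivial factorization $f=pq$ yields an irreducible right divisor of $q$ whose degree is less than $\deg f$. So we may pick $s$ of degree less than $\deg f$, which forces $k\in\{1,2\}$.

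If $k=1$, then $f=\mathrm{Nrd}(s)$ with $s$ irreducible, and we are done. If $k=2$, then $\mathrm{Nrd}(t)=f^{3}/f^{2}=f$. So it suffices to show that $t$ is irreducible. Suppose $t=t_1t_2$ with both factors of positive degree. Then $\mathrm{Nrd}(t_1)\mathrm{Nrd}(t_2)=f$, and by the same UFD argument one of these norms is a unit, hence of degree $0$. But $\deg \mathrm{Nrd}(t_i)=3\deg t_i>0$, a contradiction. Hence $t$ is irreducible with $\mathrm{Nrd}(t)=f$, proving the claim in this case as well.

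The main obstacle is justifying that $\mathrm{Nrd}$ of a polynomial in ${\cal{D}}[x]$ is a polynomial in $K[x]$, with leading coefficient $\mathrm{Nrd}$ of the leading coefficient, so monic for monic input. I would argue this via a splitting field $E$ of ${\cal{D}}$. In ${\cal{D}}\otimes_K E\cong M_3(E)$, the element $s(x)$ becomes a matrix over $E[x]$, and its determinant lies in $E[x]\cap K(x)=K[x]$. The top-degree term of that determinant is $\det$ of the leading coefficient matrix, namely $\mathrm{Nrd}(\text{leading coefficient})$, which is nonzero; this also gives the degree formula already stated in the paper. Note that the argument never uses the reduced-norm degree beyond the formula $\deg\mathrm{Nrd}(s)=3\deg s$. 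The prime degree $3$ is essential in forcing $k\in\{1,2\}$ to pair up as $k$ and $3-k$, one of which equals $1$.
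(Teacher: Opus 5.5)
Your proof is correct and is essentially the paper's argument. Multiplicativity of $\mathrm{Nrd}$ gives $f^3=\mathrm{Nrd}(t)\,\mathrm{Nrd}(s)$, unique factorization in $K[x]$ forces $\mathrm{Nrd}(s)=f^k$, and since $k\in\{1,2\}$ one of $k$, $3-k$ equals $1$; beyond the paper, you also verify that $\mathrm{Nrd}$ of a monic element of ${\cal{D}}[x]$ is a monic polynomial in $K[x]$ (so no stray constant appears), and that a factor whose reduced norm is the irreducible $f$ is automatically irreducible in ${\cal{D}}[x]$.
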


\begin{proof}
Assume that $f(x)=p(x)q(x)$ for some non-constant polynomials $p(x), q(x)\in {\cal{D}}[x]$. Then
$$
f(x)^3=\mathrm{Nrd}(f(x))=\mathrm{Nrd}(p(x))\mathrm{Nrd}(q(x)).
$$
Since $f(x)$ is irreducible in $K[x]$, either $f(x)=\mathrm{Nrd}(p(x))$ or $f(x)=\mathrm{Nrd}(q(x))$.
\end{proof}

\begin{proposition}
Let $\cal{D}$ be a central division $K$-algebra.  Let $f(x)\in K[x]$ be an irreducible polynomial in $K[x]$
and let $L$ be the field $K[x]/(f(x))$.
Assume that $\mathrm{deg} ({\cal{D}})=\mathrm{deg} (f(x))$
and ${\cal{D}}\otimes_K L$ is a matrix $L$-algebra.
Then $f(x)=\mathrm{Nrd}(p(x))$ for some linear polynomial $p(x)\in {\cal{D}}[x]$.
Moreover, $f(x)$ is a product of linear factors.
\end{proposition}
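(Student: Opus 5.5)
Let $n=\mathrm{deg}(\mathcal{D})=\mathrm{deg}(f(x))$. The plan mirrors the proof of the second corollary above: first produce a root of $f(x)$ inside ${\cal{D}}$, then read off both conclusions from that root.

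Since ${\cal{D}}\otimes_K L$ is a matrix $L$-algebra, $L$ splits ${\cal{D}}$. Moreover $[L:K]=\mathrm{deg}(f(x))=n=\mathrm{deg}({\cal{D}})$, so by \cite[Corollary 13.3]{Pi82} the field $L$ embeds into ${\cal{D}}$ as a maximal subfield. The class $x+(f(x))$ is a root of $f(x)$ in $L$, so its image $a\in{\cal{D}}$ satisfies $f(a)=0$. Because $f(x)$ has central coefficients, $a$ is a right root, and $x-a$ is a right divisor of $f(x)$ by \cite[Proposition 16.2]{La91}.

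To obtain the norm statement, put $p(x)=x-a\in{\cal{D}}[x]$. The reduced norm $\mathrm{Nrd}(x-a)$ in ${\cal{D}}\otimes_K K(x)$ is the reduced characteristic polynomial of $a$, evaluated at $x$ (up to the sign convention, which is harmless for monic polynomials of this form). Since $K(a)\cong L$ is a maximal subfield of degree $n$, the reduced characteristic polynomial of $a$ equals its minimal polynomial over $K$. That minimal polynomial is $f(x)$ divided by its leading coefficient. Hence $\mathrm{Nrd}(x-a)=f(x)$ when $f$ is monic. If $f$ is not monic with leading coefficient $c$, I would rescale: take $p(x)=c'(x-a)$ with $\mathrm{Nrd}(c')=c$ when such $c'$ exists. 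The statement is really intended for monic $f$, as in the preceding proposition, so I would assume $f$ monic. This identification of the reduced norm with the characteristic polynomial of a maximal-subfield generator is the one step I would check most carefully. One can verify it by extending scalars to $L$, or to a splitting field, where $a$ becomes a matrix with characteristic polynomial $f$.

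Finally, the claim that $f(x)$ is a product of linear factors follows from \cite[Theorem 16.9]{La91}, exactly as in the second corollary: a central polynomial that has a root in ${\cal{D}}$ splits into linear factors in ${\cal{D}}[x]$. Alternatively, one can argue directly. The conjugates $b a b^{-1}$ of $a$ are all roots of $f$. Over $L\subset{\cal{D}}$ the polynomial $f$ factors as $\prod_i (x-a_i)$ after choosing appropriate conjugates, and one peels off linear right factors inductively. Citing Theorem 16.9 avoids this.
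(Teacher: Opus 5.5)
Your proof is correct and follows the paper's own argument. You embed $L$ into ${\cal{D}}$ as a maximal subfield via \cite[Corollary 13.3]{Pi82}, take $p(x)=x-a$ for the image $a$ of the root, identify $\mathrm{Nrd}(x-a)$ with the reduced characteristic (hence minimal) polynomial of $a$, and obtain the linear factorization from \cite[Theorem 16.9]{La91} exactly as in the preceding corollary. Your monicity caveat is also justified: the paper's equality $f(x)=\mathrm{Nrd}(x-a)$ tacitly assumes $f$ monic, and in general the leading coefficient of $f$ must be a reduced norm from ${\cal{D}}^*$; for example, the claim fails for $-(x^2+1)$ over Hamilton's quaternions $\mathbb{H}$.
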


\begin{proof}
The field $L$ is a splitting field of the algebra ${\cal{D}}$
and $L$ is isomorphic to a maximal subfield of the algebra ${\cal{D}}$ by \cite[Corollary 13.3]{Pi82}.
Therefore $f(x)=\mathrm{Nrd}(x-a)$, where $a$ is a root of $f(x)$ in $L$.
\end{proof}

\section{Factorization of central polynomials}

In some cases, factors of a central polynomial can be found using ideas from Algorithm 2 in \cite{Ko23}.
Let ${\cal{D}}$ be a central division algebra with center $K$.

Suppose that $f(x)$ is an irreducible  polynomial  over $K$, let $L$ be the field $K[x]/(f(x))$.
If the algebra ${\cal{D}} \otimes_K L$ is division, then $f(x)$ remains irreducible in ${\cal{D}}[x]$ by Theorem \ref{th:criterion}.
Assume that ${\cal{D}} \otimes_K L \cong {\cal{D}}[x]/(f(x))$ has a zero-divisor $z$. Let $p(x)$ be the preimage  of $z$
under the homomorphism
$$
{\cal{D}}[x] \longrightarrow {\cal{D}}[x]/(f(x)).
$$
Then
$\mathrm{Nrd}(p(x))$ is divisible by $f(x)$.
Consequently, under certain conditions, computing the greatest common right divisor of the polynomials $f(x)$ and $p(x)$
allows one to successfully obtain a non-trivial divisor of $f(x)$.

Below, we provide an example illustrating how to compute factors of a central polynomial over a symbol algebra of degree 3.

\begin{example}

Let $K=\mathbb{Q}(\sqrt{-3})$ and let ${\cal{D}}=(2,7)_3$ be the symbol algebra of degree 3 over $K$.
That is, ${\cal{D}}$ is generated by two elements $X$ and $Y$ subject to the relations
$$
X^3 = 2, \quad Y^3 = 7, \quad YX = \zeta_3 XY,
$$
where $\zeta_3$ is a primitive cube root of unity.
The algebra $\cal{D}$ is a division algebra since one can show that its localization ${\cal{D}}\otimes_K \mathbb{Q}_7$ is a local division algebra, where $\mathbb{Q}_7$ denotes the field of 7-adic numbers.

The polynomial
$$
f(x) = x^{6}-72
$$
is irreducible in $K[x]$.
Indeed, by \cite[Chapter 6, Theorem 9.1]{La02}, it suffices to show that $72$ is neither a square nor a cube in the field $K$.
It is easy to see that $72$ is not a square in $K$ since $\sqrt{2} \notin K$.
Furthermore, since $72$ differs from $9$ by a perfect cube in $\mathbb{Q}$, it is a cube in $K$ if and only if $9$ is.
This is impossible because $[\mathbb{Q}(\sqrt[3]{9}) : \mathbb{Q}] = 3$, which does not divide $[K : \mathbb{Q}] = 2$.

Let $L = K(a)$, where $a$ is a root of the polynomial $f(x)$. Then $L \cong K[x]/(f(x))$ and,
using the Albert--Brauer--Hasse--Noether theorem (see, e.g., \cite[\S~18.4]{Pi82}), one can show that
${\cal{D}} \otimes_K L$ is a split algebra. Hence, by Theorem \ref{th:criterion}, the polynomial $f(x)$ is reducible in ${\cal{D}}[x]$.

There is an explicit isomorphism of $L$-algebras
$$
\mathcal{D}\otimes _{K}L \longrightarrow M_{3}(L)
$$
sending the generators $X$ and $Y$ to the following matrices:
$$
X \mapsto \begin{pmatrix}\frac{a^{2}}{2}&0&-7\zeta _{3}\\ -1&\frac{a^{2}\zeta _{3}^{2}}{2}&0\\ 0&-\zeta _{3}^{2}&\frac{a^{2}\zeta _{3}}{2}\end{pmatrix}, \quad
Y \mapsto \begin{pmatrix} 0 & 0 & 7 \\ 1 & 0 & 0 \\ 0 & 1 & 0 \end{pmatrix}.
$$

The preimage of the matrix
$$
\begin{pmatrix} 1 & 0 & 0 \\ 0 & 0 & 0 \\ 0 & 0 & 0 \end{pmatrix}
$$
under this isomorphism is given by the element
$$
\frac{1}{3} + \frac{a^4}{24} X + \frac{a^2 \zeta_3}{12} XY + \frac{\zeta_3^2}{6} XY^2 + \frac{a^2}{12} X^2 - \frac{1}{6} X^2Y.
$$

Thus, the element
$$
8 + a^4 X + 2a^2 \zeta_3 XY + 4\zeta_3^2 XY^2 + 2a^2 X^2 - 4X^2Y
$$
is a zero divisor in ${\cal{D}} \otimes_K L$.

The preimage of this zero divisor under the homomorphism ${\cal{D}}[x] \longrightarrow {\cal{D}} \otimes_K L \cong {\cal{D}}[x]/(f(x))$
is the polynomial
$$
p_1(x):= X \cdot x^4 + \left( 2X^2 + 2\zeta_3 XY \right) x^2 + \left( 8 + 4\zeta_3^2 XY^2 - 4X^2Y \right).
$$

One can check that
$$
\mathrm{Nrd}(p_1(x)) = 2(x^{6}-72)^{2} = 2 f(x)^{2}.
$$

Consequently, $p_1(x)$ is a factor of $f(x)^2$ in ${\cal{D}}[x]$.
A routine calculation shows that $f(x)$ is divisible by $p_1(x)$ both from the left and from the right, yielding the  factorization
$$
x^{6}-72 = \left( X x^4 + \left( 2X^2 + 2\zeta_3 XY \right) x^2 + \left( 8 + 4\zeta_3^2 XY^2 - 4X^2Y \right) \right) \cdot \left( \frac{1}{2}X^2 x^2 - X^2Y - 2 \right) =
$$
$$
\left( \frac{1}{2}X^2 x^2 - X^2Y - 2 \right) \cdot \left( X  x^4 + \left( 2X^2 + 2\zeta_3 XY \right) x^2 + \left( 8 + 4\zeta_3^2 XY^2 - 4X^2Y \right) \right)
$$
in ${\cal{D}}[x]$.

Similarly, taking the preimage of the matrix unit
$$
\begin{pmatrix} 0 & 0 & 0 \\ 0 & 1 & 0 \\ 0 & 0 & 0 \end{pmatrix},
$$
which is given by the element
$$
\frac{1}{3} + \frac{a^4 \zeta_3}{24} X + \frac{a^2 \zeta_3^2}{12} XY + \frac{1}{6} XY^2 + \frac{a^2 \zeta_3^2}{12} X^2 - \frac{\zeta_3^2}{6} X^2Y
$$
we obtain the polynomial
$$
p_2(x) :=
 \zeta_3 X x^4 + \left( 2\zeta_3^2 X^2 + 2\zeta_3^2 XY \right) x^2 + \left( 8 + 4 XY^2 - 4 \zeta_3^2 X^2Y \right).
$$
This leads to another factorization of the polynomial $f(x)$:
$$
x^{6}-72 =
$$
$$
\left(  \zeta_3 X  x^4 + \left( 2\zeta_3^2 X^2 + 2\zeta_3^2 XY \right) x^2 + \left( 8 + 4 XY^2 - 4 \zeta_3^2 X^2Y \right) \right) \cdot \left( \frac{\zeta_3^2}{2}X^2 x^2 - \zeta_3^2 X^2Y - 2
 \right)
=
$$
$$
\left( \frac{\zeta_3^2}{2}X^2 x^2 - \zeta_3^2 X^2Y - 2
 \right) \cdot \left( \zeta_3 X x^4 + \left( 2\zeta_3^2 X^2 + 2\zeta_3^2 XY \right) x^2 + \left( 8 + 4 XY^2 - 4 \zeta_3^2 X^2Y \right) \right).
$$
\end{example}

\vspace{-2mm}

\renewcommand{\refname}


{\normalsize\bf References}
\begin{thebibliography}{00}
\leftskip=-7mm
\parskip=-0mm
\parsep=0mm
\itemsep=0mm
\labelwidth=-12mm


\bibitem{Or33} Ore O. Theory of non-commutative polynomials.  {\it Ann. of Math. (2)}, 1933, vol.\,34, no.~3, pp.\,480–-508.

\bibitem{La91}
Lam T. Y. {\it A first course in noncommutative rings.} Graduate Texts in Mathematics 131. Springer-Verlag, New York, 1991.

\bibitem{GoMo65} Gordon B., Motzkin T.S. On the zeros of polynomials over division rings.  {\it Trans. Amer. Math. Soc.}, 1965, vol.\,116, pp.\,218--226.

\bibitem{BrWh83}
Bray U., Whaples G. Polynomials with coefficients from a division ring.  {\it Can. J. Math.}, 1983, vol.\,35, pp.\,509--515.

\bibitem{Ni41}
Niven I. Equations in quaternions. {\it Amer. Math. Monthly}, 1941,  vol.\,48, no.~10, pp.\,654–661.


\bibitem{FaMiSeSo17}
Falc\~{a}o M. I., Miranda F., Severino R., Soares M. J.  Mathematica Tools for Quaternionic Polynomials.
{\it Computational science and its applications.}  ICCSA 2017. Part II. Lecture Notes in Comput. Sci., 10405 Springer, Cham, 2017, pp.\,394--408.

\bibitem{HuSo02}
Huang L., So W. Quadratic formulas for quaternions.  {\it Appl. Math. Lett.}, 2002, vol.\,15, no.~5, pp.\,533--540.

\bibitem{JaOp10} Janovsk\'{a} D., Opfer G. A note on the computation of all zeros of simple quaternionic polynomials.
{\it SIAM J. Numer. Anal.}, 2010,  vol.\,48, no.~1, pp.\,244--256.

\bibitem{SePeVi01} Ser\^{o}dio R., Pereira E., Vit\'{o}ria J. Computing the zeros of
quaternion polynomials.  {\it Comput. Math. Appl.}, 2001,  vol.\,42, no.~8-9,  pp.\,1229--1237.

\bibitem{SeSi01} Ser\^{o}dio R., Siu L.-S. Zeros of quaternion polynomials.
{\it Appl. Math. Lett.}, 2001, vol.\,14,  no.~2, pp.\,237--239.

\bibitem{Ka13} Kalantari B., Algorithms for quaternion polynomial root-finding. {\it J. Complexity}, 2013, vol.\,29,  no.~3-4, pp.\,302–-322.

\bibitem{SaKoSo19} Sakkalis T., Ko K., Song G. Roots of quaternion polynomials: theory
and computation. {\it Theoret. Comput. Sci.}, 2019, vol.\,800, pp.\, 173--178.

\bibitem{GoTi25} Goutor A. G., Tikhonov S. V. Polynomials over division rings. (Russian) {\it Tr. Inst. Mat.}, 2025, vol.\,33, no.~2, pp.\, 13--20.

\bibitem{LiScSc19} Li Z., Scharler D.F., Schr\"{o}cker H.-P.  Factorization results for left polynomials in some associative real algebras: state of the art, applications, and open questions.
{\it J. Comput. Appl. Math.}, 2019, vol.\,349, pp.\, 508–522.

\bibitem{ScSc21}
Scharler D.F., Schr\"{o}cker H.-P.  An algorithm for the factorization of split
quaternion polynomials. {\it Adv. Appl. Clifford Algebr.}, 2021,   vol.\,31, no.~3, Paper No. 29.

\bibitem{Ko23} Koprowski P. Factorization and root-finding for polynomials over division quaternion algebras.
{\it  Proceedings of the International Symposium on Symbolic \& Algebraic Computation} (ISSAC 2023), 417--424, ACM, New York, [2023].

\bibitem{Pi82} Pierce R. S. {\it Associative algebras.}  Studies in the History of Modern Science, 9. Graduate Texts in Mathematics, 88. Springer-Verlag, New York-Berlin, 1982. 

\bibitem{La02} Lang S. {\it Algebra.} Graduate Texts in Mathematics 211. Springer-Verlag, New York, 2002.

\label{ivanovend}
\end{thebibliography}
\end{document}